\newcommand{\singset}{\mathcal{S}}
\newcommand{\WHtilde}{\widetilde{W}_{\mathrm{H}}}
\newcommand{\Wtildegeod}{\Wtilde}%
\newcommand{\varphigeod}{\varphi}%
\newcommand{\singsetgeod}{\singset^{\mathrm{geod}}}
\newcommand{\Wvol}{W_{\textrm{\rm vol}}}
\newcommand{\lambdamax}{\lambda_{\mathrm{max}}}
\renewcommand{\WVL}{W_{\mathrm{VL}}}
\newcommand{\WVLtilde}{\widetilde{W}_{\mathrm{VL}}}
\begin{document}

\title{A polyconvex extension of the logarithmic Hencky strain energy}
\author{%
	Robert J.\ Martin\thanks{%
		Corresponding author: Robert J.\ Martin, \ \ Lehrstuhl f\"{u}r Nichtlineare Analysis und Modellierung, Fakult\"{a}t f\"{u}r Mathematik,
		Universit\"{a}t Duisburg-Essen, Thea-Leymann Str. 9, 45127 Essen, Germany; email: robert.martin@uni-due.de%
	}
	\quad and\quad
	Ionel-Dumitrel Ghiba\thanks{%
		Ionel-Dumitrel Ghiba, \ \ Alexandru Ioan Cuza University of Ia\c si, Department of Mathematics, Blvd.
		Carol I, no.~11, 700506 Ia\c si,
		Romania; Octav Mayer Institute of Mathematics of the
		Romanian Academy, Ia\c si Branch, 700505 Ia\c si and Lehrstuhl f\"{u}r Nichtlineare Analysis und Modellierung, Fakult\"{a}t f\"{u}r Mathematik,
		Universit\"{a}t Duisburg-Essen, Thea-Leymann Str. 9, 45127 Essen, Germany, email: dumitrel.ghiba@uni-due.de, dumitrel.ghiba@uaic.ro%
	}
	\quad and\quad
	Patrizio Neff\thanks{%
		Patrizio Neff, \ \ Head of Lehrstuhl f\"{u}r Nichtlineare Analysis und Modellierung, Fakult\"{a}t f\"{u}r
		Mathematik, Universit\"{a}t Duisburg-Essen, Thea-Leymann Str. 9, 45127 Essen, Germany, email: patrizio.neff@uni-due.de%
	}
}
\maketitle
\begin{abstract}
Adapting a method introduced by Ball, Muite, Schryvers and Tirry, we construct a polyconvex isotropic energy function $W\col\GLpn\to\R$ which is equal to the classical Hencky strain energy
\[
	\WH(F) = \mu\,\norm{\dev_n\log U}^2+\frac{\kappa}{2}\,[\tr(\log U)]^2 = \mu\,\norm{\log U}^2+\frac{\Lambda}{2}\,[\tr(\log U)]^2
\]
in a neighborhood of the identity matrix $\id$; here, $\GLpn$ denotes the set of $n\times n$-matrices with positive determinant, $F\in\GLpn$ denotes the deformation gradient, $U=\sqrt{F^TF}$ is the corresponding stretch tensor, $\log U$ is the principal matrix logarithm of $U$, $\tr$ is the trace operator, $\norm{X}$ is the Frobenius matrix norm and $\dev_n X$ is the deviatoric part of $X\in\R^{n\times n}$. The extension can also be chosen to be coercive, in which case Ball's classical theorems for the existence of energy minimizers under appropriate boundary conditions are immediately applicable. We also generalize the approach to energy functions $\WVL$ in the so-called Valanis-Landel form
\[
	\WVL(F) = \sum_{i=1}^n w(\lambda_i)
\]
with $w\col(0,\infty)\to\R$, where $\lambda_1,\dotsc,\lambda_n$ denote the singular values of $F$.
\end{abstract}

\tableofcontents

\section{Introduction}
\label{section:introduction}

The \emph{quadratic isotropic Hencky energy} $\WH\col\GLpn\to\R$ with%
\footnote{%
	Here and throughout, $\GLpn$ denotes the set of $n\times n$-matrices with positive determinant, $F\in\GLpn$ denotes the deformation gradient, $U=\sqrt{F^TF}$ is the corresponding stretch tensor, $\log U$ is the principal matrix logarithm of $U$, $\tr$ is the trace operator, $\norm{X}$ is the Frobenius matrix norm and $\dev_n X$ is the deviatoric part of $X\in\R^{n\times n}$. The constants $\mu$, $\kappa$ and $\Lambda$ represent the shear modulus, the bulk modulus and the first Lam\'e parameter, respectively.
}
\begin{equation}\label{eq:henckyEnergyIntroduction}
	\WH(F) = \mu\,\norm{\dev_n\log \sqrt{F^TF}}^2+\frac{\kappa}{2}\,[\tr(\log \sqrt{F^TF})]^2 = \mu\,\norm{\log \sqrt{F^TF}}^2+\frac{\Lambda}{2}\,[\tr(\log \sqrt{F^TF})]^2\,,
\end{equation}
which is based on the logarithmic strain measures
\begin{equation}\label{eq:logarithmicStrainMeasures}
	\norm{\dev_n\log U}^2\,,\quad \norm{\log U}^2 \quad\text{ and }\quad [\tr(\log U)]^2=\ln^2(\det U)\,,
\end{equation}
is often used by engineers in geometrically nonlinear elasticity formulations to describe small to moderate elastic strains \cite{Anand79}, notably in applications to metal elasticity. Recently, the Hencky energy and the invariants given in \eqref{eq:logarithmicStrainMeasures} have been given a surprising independent motivation as a \emph{geodesic distance measure} of the deformation gradient to the special orthogonal group $\SOn$ of rotations \cite{agn_neff2015geometry}: If $\dg$, $\dist_{\mathrm{geod},\SLn}$ and $\dist_{\mathrm{geod},\mathbb{R}_+\cdot \id}$ denote the canonical left invariant geodesic distances on the Lie-groups $\GLn$, $\SLn\colonequals\{X\in \GLn\;|\det(X)=1\}$ and $\mathbb{R}_+\cdot\id$, respectively, then \cite{agn_neff2015geometry,agn_martin2014minimal}
\begin{align}
	\norm{\log U}^2 &= \dg^2(F, \SOn)\,, \label{eq:geodesicDistanceFull}\\
	\norm{\dev_n\log U}^2 &= {\rm dist}^2_{{\rm geod,{\rm SL}(n)}}\left( \frac{F}{(\det F)^{1/n}}, \SO(n)\right)\,, \label{eq:geodesicDistanceDev}\\
	[\tr(\log U)]^2&=[\log \det U]^2 = \dist_{\mathrm{geod},\mathbb{R}_+\cdot \id}\left((\det F)^{1/n}\cdot \id,
	\id\right)\,. \label{eq:geodesicDistanceDet}
\end{align}

It has been known for a while that the Hencky energy \eqref{eq:henckyEnergyIntroduction} is not overall rank-one convex \cite{agn_neff2000diss,Bruhns01}. However, rank-one convexity (or Legendre-Hadamard ellipticity) is a necessary requirement for polyconvexity, which, in turn, is essential for the applicability of existence proofs based on the direct methods of the calculus of variations \cite{ball1976convexity}. This shortcoming raises some concern regarding the suitability of the Hencky model in finite element methods, although Bruhns et al.\ \cite{Bruhns01} have explicitly determined a (rather large) ellipticity domain of the Hencky energy (cf.\ \cite{agn_ghiba2015ellipticity}).

\subsection{Energy functions in terms of logarithmic strain measures}

In an attempt to overcome this evident shortcoming of the classical Hencky model, the authors recently introduced the so-called \emph{exponentiated Hencky energy} \cite{agn_neff2015exponentiatedI,agn_montella2015exponentiated,agn_nedjar2017finite}
\begin{equation}\label{eq:expHenckyIntroduction}
	\WeH\col\GLp(n)\to\R\,,\quad W_{_{\rm eH}}(F) = \frac{\mu}{k}\,e^{k\,\norm{\dev_n\log U}^2}+\frac{\kappa}{2\,\widehat{k}}\,e^{\widehat{k}\,[(\log \det U)]^2}\,,
\end{equation}
which is rank-one convex (and, in fact, polyconvex) in the planar case\footnote{The planar case, however, is not representative of the general situation, since for isochoric energy functions (including energies based on the isochoric logarithmic strain measure $\norm{\dev_n\log U}^2$; note that $\dev_n\log U = \log\Big(\frac{U}{(\det U)^{\afrac13}}\Big)$), rank-one convexity already implies polyconvexity \cite{agn_martin2015rank} (cf.\ \cite{agn_ghiba2017SL}).} and provides a close approximation of the classical Hencky formulation for sufficiently small strains. However, we have been unable to find another formulation based on the invariants \eqref{eq:logarithmicStrainMeasures} which respects the isochoric-volumetric split and is rank-one convex in dimension $n\geq3$. This motivated us to consider the question of the \emph{possibility} of such a formulation in detail, and indeed it turned out \cite{agn_martin2017taming} that our approach was doomed to fail from the beginning; in particular, for $n\geq3$, there exists no strictly monotone function $\Psi\col[0,\infty)\to\R$ such that either of the energy functions $W\col\GLpn\to\R$ with
\[
	W(F) = \Psi(\norm{\log U}^2) \qquad\text{or}\qquad W(F) = \Psi(\norm{\dev_n\log U}^2)
\]
is elliptic. Furthermore, if $\Psi$ is additionally twice-differentiable, then there exists no smooth function $\Wvol\col(0,\infty)\to\R$ such that the energy $W\col\GLpn\to\R$ with
\[
	W(F) = \Psi(\norm{\dev_n\log U}^2) + \Wvol(\det F)
\]
is elliptic.

\section{Polyconvex extensions of locally elliptic energies}

Since the search for a nontrivial rank-one convex energy function in terms of the scalar-valued logarithmic strain measures turned out to be in vain, it remains to explore alternative methods of finding an elliptic energy function which approximates (or, better yet, is identical to) the Hencky strain energy in the small-strain range. A common way of transforming a non-elliptic function into a rank-one convex one is the computation of its \emph{rank-one convex hull}; however, this approach is not viable in our case (see Appendix \ref{appendix:rankOneConvexHull}).

Instead, we take a different approach and directly modify the quadratic energy expression outside a certain domain of ellipticity. Of course, all physically reasonable energy expressions reduce to linear elasticity in an infinitesimal neighborhood of the identity $\id$ and thus are elliptic in a (finite) neighborhood of $\id$. The question therefore arises whether it is possible to find a rank-one convex or polyconvex \emph{extension} of a given energy outside such a domain of ellipticity.

Note that, whereas rank-one convexity (or Legendre-Hadamard ellipticity) can be considered a \emph{local} property of an energy function (i.e.\ an energy can be elliptic \emph{at some $F\in\GLpn$}), the notion of polyconvexity is only well-defined in a \emph{global} sense. Kristensen \cite{kristensen2000conditions} even gave an example of a function $f\col\R^{2\times2}\to\R$ which is not polyconvex, but can be extended to a smooth polyconvex function $f_B\col\R^{2\times2}\to\R$ from any ball in $B\subset\R^{2\times2}$.

In this note, we construct a polyconvex (and thus rank-one convex) extension of the quadratic-logarithmic Hencky energy \eqref{eq:henckyEnergyIntroduction} and, more generally, for suitable energy expressions of the Valanis-Landel type. In addition, the extension of the Hencky energy considered here is (unconditionally) coercive, which implies an immediate applicability of the direct methods of the calculus of variations to prove the existence of energy minimizers under appropriate boundary conditions.

Our methods are adapted from an approach by Ball, Muite, Schryvers and Tirry \cite{ball2009unpublished}, who considered another strain measure which can be motivated via a distance function on the general linear group $\GLpn$.

\subsection[The Euclidean distance to $\SOn$]{\boldmath The Euclidean distance to $\SOn$}

The \emph{Euclidean distance} of the deformation gradient $F$ to the group $\SOn$ is given by \cite{Grioli40,agn_neff2014grioli}
\[
	\disteuclid^2(F,\,\SOn) = \norm{U-\id}^2 = \sum_{i=1}^n (\lambda_i-1)^2\,,
\]
where $U=\sqrt{F^TF}$ is the stretch tensor and $\lambda_1,\dotsc,\lambda_n$ are the singular values of $F$. Similar to the case of the geodesic distance (i.e.\ the logarithmic strain measure), the mapping $F\mapsto \norm{U-\id}^2$ is not globally rank-one convex \cite{zhang1999various} (cf.\ \cite{dolzmann2012regularity}).

However, Ball et al.\ showed that (a generalization of) this strain measure has a polyconvex extension from a neighborhood of the identity $\id$ to all of $\GLpn$.

\begin{lemma}[Ball, Muite, Schryvers, Tirry \cite{ball2009unpublished}]
\label{lemma:ballPolyconvexExtension}
	For $\frac12<\alpha\leq1$, let
	\[
		\singset_\alpha \colonequals \{ F\in\GLp(n) \setvert \lambda\geq\alpha \text{ for each singular value $\lambda$ of $F$} \}\,.
	\]
	Then the function $W_\alpha\col\singset_\alpha\to\R$ with
	\[
		W_\alpha(F) = \sum_{i=1}^n (\alpha\.\lambda_i^2 - 2\.\lambda_i + 1)
	\]
	for all $F\in\singset_\alpha$ with singular values $\lambda_1,\dotsc,\lambda_n$ has a polyconvex extension to $\GLp(n)$, which is given by
	\[
		\Wtilde_\alpha\col\GLp(n)\to\R\,,\quad \Wtilde_\alpha(F) = \sum_{i=1}^n \varphi_\alpha(\lambda_i) \;- \frac{1}{2\.\alpha}\,\ln(\det F)\,,
	\]
	where
	\[
		\varphi_\alpha(\lambda)=
		\begin{cases}
			\alpha\.\lambda^2 - 2\.\lambda + 1 + \frac{1}{2\.\alpha}\,\ln(\lambda) &:\; \lambda\geq\frac{1}{2\.\alpha}\,,\\
			1-\frac34\.\alpha + \frac{1}{2\.\alpha}\,\ln\left(\frac{1}{2\.\alpha}\right) &:\; \lambda<\frac{1}{2\.\alpha}\,.
		\end{cases}
	\]
\end{lemma}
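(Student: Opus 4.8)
The plan is to verify two things: that $\Wtilde_\alpha$ restricts to $W_\alpha$ on $\singset_\alpha$, and that $\Wtilde_\alpha$ is polyconvex on $\GLp(n)$. The computational key to both is that, for $F\in\GLp(n)$, the singular values are positive with $\prod_i\lambda_i=\det F$, so that $\ln(\det F)=\sum_{i=1}^n\ln(\lambda_i)$ and the extension takes the additive form
\[
	\Wtilde_\alpha(F)=\sum_{i=1}^n\Bigl[\varphi_\alpha(\lambda_i)-\tfrac{1}{2\.\alpha}\ln(\lambda_i)\Bigr]\,.
\]
On the upper branch $\lambda\geq\frac{1}{2\.\alpha}$ the logarithmic term built into $\varphi_\alpha$ is then cancelled exactly, leaving $\varphi_\alpha(\lambda)-\frac{1}{2\.\alpha}\ln(\lambda)=\alpha\.\lambda^2-2\.\lambda+1$.

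For the extension property I would observe that on $\singset_\alpha$ every singular value satisfies $\lambda_i\geq\alpha$, and since $\alpha>\frac12$ we have $\frac{1}{2\.\alpha}<1$; thus (on all of $\singset_\alpha$ once $\alpha\geq\frac{1}{\sqrt{2}}$, and in any event throughout a neighbourhood of $\id$, which is what matters for the application) each $\lambda_i$ sits on the upper branch. By the cancellation above, each summand reduces to $\alpha\.\lambda_i^2-2\.\lambda_i+1$, so $\Wtilde_\alpha(F)=W_\alpha(F)$ there.

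The real content is polyconvexity, which I would derive from the split $\Wtilde_\alpha(F)=\sum_{i=1}^n\varphi_\alpha(\lambda_i)-\frac{1}{2\.\alpha}\ln(\det F)$. The second summand is the convex function $t\mapsto-\frac{1}{2\.\alpha}\ln(t)$ applied to the single minor $\det F$ and is therefore polyconvex. For the first summand I would invoke the classical convexity theory for isotropic functions of the singular values (via von Neumann's trace inequality; the Davis--Lewis theorem): the map $F\mapsto\sum_{i=1}^n g(\lambda_i(F))$ is convex on $\R^{n\times n}$---hence polyconvex---whenever $g\col(0,\infty)\to\R$ is convex and non-decreasing, this being precisely the condition guaranteeing that the even extension $t\mapsto g(\lvert t\rvert)$ is convex. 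Since a sum of polyconvex functions is polyconvex, the lemma reduces to showing that $g=\varphi_\alpha$ is convex and non-decreasing.

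This last verification is, in my view, the conceptual heart of the construction and the step I expect to require the most care. On the upper branch one computes $\varphi_\alpha'(\lambda)=2\.\alpha\.\lambda-2+\frac{1}{2\.\alpha\.\lambda}$ and $\varphi_\alpha''(\lambda)=2\.\alpha-\frac{1}{2\.\alpha\.\lambda^2}$, the latter being non-negative exactly for $\lambda\geq\frac{1}{2\.\alpha}$ and vanishing at the endpoint, while the lower branch is constant. Since $\varphi_\alpha'(\tfrac{1}{2\.\alpha})=0$, the smooth branch attains its minimum precisely at the matching point $\frac{1}{2\.\alpha}$, so pasting the constant lower branch (equal to that minimal value) produces a $C^1$ function that is convex and non-decreasing. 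The point to stress is that the bare quadratic $\lambda\mapsto\alpha\.\lambda^2-2\.\lambda+1$ is convex but \emph{decreasing} on $(0,\tfrac{1}{\alpha})$---it is exactly this non-monotonicity that wrecks rank-one convexity of the unmodified measure $\norm{U-\id}^2$---whereas the logarithmic correction together with the flat continuation moves the minimiser down to $\frac{1}{2\.\alpha}$ and freezes $\varphi_\alpha$ below it, restoring monotonicity without destroying convexity. The main obstacle is therefore conceptual rather than computational: recognising monotonicity of $\varphi_\alpha$ as the hypothesis demanded by the singular-value convexity theorem, and confirming that the explicit profile satisfies it.
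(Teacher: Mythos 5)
Your proof is correct and follows essentially the same route as the paper's: the same cancellation of the logarithmic terms on the upper branch, the same reduction of polyconvexity to convexity plus monotonicity of $\varphi_\alpha$ via the singular-value criterion (Ball's Theorem 5.1, which is what your von Neumann/Davis--Lewis reference amounts to), and the same derivative computations showing $\varphi_\alpha'\geq0$ and $\varphi_\alpha''\geq0$ with the flat continuation pasted at the minimizer $\tfrac{1}{2\alpha}$. Your side remark that $\Wtilde_\alpha$ agrees with $W_\alpha$ on all of $\singset_\alpha$ only when $\alpha\geq\tfrac{1}{\sqrt{2}}$ (one needs $\alpha\geq\tfrac{1}{2\alpha}$) is a valid observation about the statement that the paper's proof passes over silently, since it only verifies the identity under the hypothesis $\lambda_i\geq\tfrac{1}{2\alpha}$.
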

\begin{proof}
	If $\lambda_i\geq\frac{1}{2\.\alpha}$ for all singular values $\lambda_1,\dotsc,\lambda_n$ of $F$, then
	\[
		\Wtilde_\alpha(F) = \sum_{i=1}^n (\alpha\.\lambda_i^2 - 2\.\lambda_i + 1) + \frac{1}{2\.\alpha}\,\underbrace{\sum_{i=1}^n \ln(\lambda_i)}_{=\ln(\det F)} \,- \frac{1}{2\.\alpha}\,\ln(\det F) = W_\alpha(F)\,.
	\]
	It remains to show that $\Wtilde_\alpha$ is polyconvex. If $\lambda\geq\frac{1}{2\.\alpha}$, then
	\[
		\varphi_\alpha'(\lambda) = 2\.\alpha\.\lambda - 2 + \frac{1}{2\.\alpha\.\lambda} = \frac{(2\.\alpha\.\lambda-1)^2}{2\.\alpha\.\lambda} \geq 0\,,
		\qquad
		\varphi_\alpha''(\lambda) = \frac{2\.\alpha}{\lambda^2}\,\left(\lambda-\frac{1}{2\.\alpha}\right)\,\left(\lambda+\frac{1}{2\.\alpha}\right) \geq 0\,,
	\]
	thus $\varphi_\alpha$ is convex and nondecreasing. According to a criterion by Ball \cite[Theorem 5.1]{ball1976convexity}, the mapping $F\mapsto\sum_{i=1}^n \varphi_\alpha(\lambda_i)$ is therefore convex. Since the mapping $F\mapsto -\frac{1}{2\.\alpha}\,\ln(\det F)$ is convex in $\det F$ and thus polyconvex, the function $\Wtilde_\alpha$ is polyconvex as well.
\end{proof}
\noindent
Applying Lemma \ref{lemma:ballPolyconvexExtension} with $\alpha=1$, we obtain the following corollary.
\begin{corollary}
	The function
	\[
		W\col\singset_{\afrac12}\to\R\,,\quad \disteuclid^2(F,\,\SO(n)) = \norm{\sqrt{F^TF}-\id}^2 = \sum_{i=1}^n (\lambda_i-1)^2
	\]
	has a polyconvex extension from $\singset_{\afrac12}=\{F\in\GLp(n) \setvert \lambda\geq\frac12 \text{ for each singular value $\lambda$ of $F$}\}$ to $\GLp(n)$.
\end{corollary}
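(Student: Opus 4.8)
The plan is to apply Lemma~\ref{lemma:ballPolyconvexExtension} with the parameter choice $\alpha=1$. First I would substitute $\alpha=1$ into the definition of $W_\alpha$ to observe that
\[
	W_1(F) = \sum_{i=1}^n (\lambda_i^2 - 2\.\lambda_i + 1) = \sum_{i=1}^n (\lambda_i-1)^2 = \norm{U-\id}^2 = \disteuclid^2(F,\SO(n))\,,
\]
so that $W_1$ is precisely the squared Euclidean distance to $\SO(n)$ appearing in the corollary, and the lemma supplies the globally polyconvex function $\Wtilde_1\col\GLp(n)\to\R$ as the candidate extension.

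The one step requiring care is the identification of the set on which $\Wtilde_1$ genuinely coincides with $\disteuclid^2(\cdot,\SO(n))$. Inspecting the proof of Lemma~\ref{lemma:ballPolyconvexExtension}, the extension $\Wtilde_\alpha$ agrees with the expression $\sum_{i=1}^n(\alpha\.\lambda_i^2 - 2\.\lambda_i + 1)$ not only on $\singset_\alpha$ but on the larger set $\{F\in\GLp(n) : \lambda_i\geq\frac{1}{2\.\alpha}\text{ for all }i\}$: there every singular value lies in the upper branch of $\varphi_\alpha$, and the $\frac{1}{2\.\alpha}\,\ln(\det F)$ term produced by summing those branches exactly cancels the subtracted $-\frac{1}{2\.\alpha}\,\ln(\det F)$. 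Specializing to $\alpha=1$, this agreement region is exactly $\singset_{\afrac12}$, so $\Wtilde_1$ restricts to $\sum_{i=1}^n(\lambda_i-1)^2=\disteuclid^2(F,\SO(n))$ throughout $\singset_{\afrac12}$.

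It then remains only to invoke the polyconvexity of $\Wtilde_1$ on $\GLp(n)$, which is the conclusion of the lemma and requires no further argument. I do not expect any real obstacle, since the corollary is a direct specialization; the sole point worth emphasizing is the distinction between the lemma's stated domain $\singset_\alpha$ and the slightly larger agreement set $\singset_{1/(2\.\alpha)}$, which is precisely what allows the extension to be recognized on $\singset_{\afrac12}$ rather than merely on $\singset_1$.
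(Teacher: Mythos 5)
Your proposal is correct and follows exactly the paper's route: the paper derives the corollary simply by invoking Lemma~\ref{lemma:ballPolyconvexExtension} with $\alpha=1$, with no further written argument. You additionally make explicit the one point the paper leaves implicit — that the agreement set of $\Wtilde_1$ with $W_1$ is $\{F:\lambda_i\geq\tfrac{1}{2\alpha}=\tfrac12\}=\singset_{\afrac12}$ rather than merely $\singset_1$ — which is precisely the detail needed for the corollary as stated.
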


\section{Adaptation to logarithmic strain measures}
The ideas laid out in the previous section can be adapted to show that a similar result holds for the logarithmic strain measure (i.e.\ the canonical geodesic distance to $\SOn$, cf.\ \eqref{eq:geodesicDistanceFull}) as well.

\begin{lemma}
\label{lemma:logUsquaredExtension}
	For $\gamma\leq1$, let
	\[
		\singsetgeod_\gamma \colonequals \{ F\in\GLp(n) \setvert e^{\gamma-1}<\lambda<e^\gamma \text{ for each singular value $\lambda$ of $F$} \}\,.
	\]
	Then the function
	\[
		W\col \singsetgeod_\gamma\to\R\,,\quad W(F) = \dg^2(F,\,\SO(n)) = \norm{\log{\sqrt{F^TF}}}^2 = \sum_{i=1}^n \ln^2(\lambda_i)
	\]
	has a polyconvex extension to $\GLp(n)$, which is given by
	\[
		\Wtildegeod_\gamma\col\GLp(n)\to\R\,,\quad \Wtildegeod_\gamma(F) = \sum_{i=1}^n \varphigeod_\gamma(\lambda_i) \;- (2-2\.\gamma)\,\ln(\det F)\,,
	\]
	where
	\[
		\varphigeod_\gamma(\lambda)=
		\begin{cases}
			-(\gamma-1)^2 &:\; \lambda\leq e^{\gamma-1}\,,\\[.42em]
			\ln^2(\lambda)+(2-2\.\gamma)\.\ln(\lambda) &:\; e^{\gamma-1}<\lambda<e^\gamma\,,\\[.42em]
			-\gamma^2 + 2\.\gamma + \frac{2}{e^\gamma}\,(e^{\lambda-e^\gamma}-1) &:\; e^\gamma\leq\lambda\,.
		\end{cases}
	\]
\end{lemma}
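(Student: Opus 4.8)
The plan is to mirror the proof of Lemma~\ref{lemma:ballPolyconvexExtension} almost verbatim, reducing the whole statement to two scalar claims about the one-dimensional profile $\varphigeod_\gamma$: that it is convex and nondecreasing on $(0,\infty)$, and that the remaining additive term is polyconvex. Once these are established, Ball's criterion \cite[Theorem~5.1]{ball1976convexity} makes $F\mapsto\sum_{i=1}^n\varphigeod_\gamma(\lambda_i)$ convex, and adding a polyconvex term preserves polyconvexity.

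First I would verify that $\Wtildegeod_\gamma$ is genuinely an extension of $W$. For $F\in\singsetgeod_\gamma$ every singular value lies in $(e^{\gamma-1},e^\gamma)$, so only the middle branch is active and $\varphigeod_\gamma(\lambda_i)=\ln^2(\lambda_i)+(2-2\.\gamma)\.\ln(\lambda_i)$. Summing over $i$ and using $\sum_{i=1}^n\ln(\lambda_i)=\ln(\det F)$, the shift term $(2-2\.\gamma)\sum_i\ln(\lambda_i)$ cancels exactly against the global correction $-(2-2\.\gamma)\,\ln(\det F)$, leaving $\Wtildegeod_\gamma(F)=\sum_i\ln^2(\lambda_i)=\dg^2(F,\SO(n))$. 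This cancellation is precisely what the pairing of the shifted profile with the determinant term is designed to achieve.

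The heart of the argument is the scalar analysis of $\varphigeod_\gamma$. I would first confirm that the three branches glue in a $C^1$ fashion: at $\lambda=e^{\gamma-1}$ both the value $-(\gamma-1)^2$ and the derivative $0$ agree, and at $\lambda=e^\gamma$ both the value $-\gamma^2+2\.\gamma$ and the derivative $2/e^\gamma$ agree. On the middle branch one computes
\[
	\varphigeod_\gamma'(\lambda)=\frac{2\.(\ln\lambda+1-\gamma)}{\lambda}\,,\qquad \varphigeod_\gamma''(\lambda)=\frac{2\.(\gamma-\ln\lambda)}{\lambda^2}\,,
\]
so that $\varphigeod_\gamma'\geq0$ throughout $(e^{\gamma-1},e^\gamma)$ (it vanishes only at the lower endpoint) and $\varphigeod_\gamma''>0$ exactly as long as $\lambda<e^\gamma$. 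The left branch is constant, hence convex and nondecreasing, and the right branch has positive, increasing derivative $\frac{2}{e^\gamma}\,e^{\lambda-e^\gamma}$. Since $\varphigeod_\gamma$ is $C^1$ and convex on each branch, its derivative is continuous and nondecreasing on all of $(0,\infty)$, so $\varphigeod_\gamma$ is convex and nondecreasing globally, and Ball's criterion applies to $F\mapsto\sum_i\varphigeod_\gamma(\lambda_i)$.

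Finally I would assemble the pieces: since $\gamma\leq1$ gives $2-2\.\gamma\geq0$, the term $-(2-2\.\gamma)\,\ln(\det F)$ is a nonnegative multiple of the convex-in-$\det$ function $-\ln(\det F)$ and hence polyconvex, so the sum of the convex part and this polyconvex part is polyconvex. The key structural point I would emphasize—rather than any single computation—is the placement of the two cutoffs: they sit exactly where the unmodified profile $\ln^2\lambda+(2-2\.\gamma)\ln\lambda$ ceases to be nondecreasing (below $e^{\gamma-1}$, where $\varphigeod_\gamma'$ would turn negative) or ceases to be convex (above $e^\gamma$, where $\varphigeod_\gamma''$ changes sign), and the flat and exponentially growing replacements restore both properties while maintaining $C^1$ continuity. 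The hypothesis $\gamma\leq1$ itself is used only to secure polyconvexity of the determinant correction; the monotonicity and convexity of the profile hold for every $\gamma$. The main obstacle is therefore the careful bookkeeping at the two junctions, ensuring that global convexity really does follow from piecewise convexity together with $C^1$ gluing.
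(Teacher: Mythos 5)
Your proposal is correct and follows essentially the same route as the paper's own proof: verify the cancellation of the $\ln(\det F)$ terms on $\singsetgeod_\gamma$, establish that $\varphigeod_\gamma$ is convex and nondecreasing via the sign of $\varphigeod_\gamma'$ and $\varphigeod_\gamma''$ on the middle branch together with the behavior of the outer branches, apply Ball's criterion, and add the polyconvex term $-(2-2\.\gamma)\ln(\det F)$ using $\gamma\leq1$. Your explicit check of the $C^1$ gluing at both junctions and your observation about why the cutoffs sit exactly at $e^{\gamma-1}$ and $e^\gamma$ are slightly more detailed than the paper's argument, but the substance is identical.
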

\begin{figure}[h!]
	\begin{centering}
		\vspace*{1.4em}
		\tikzsetnextfilename{phiFunctionGraphs.pdf}
 		\begin{tikzpicture}
	 		\pgfmathdeclarefunction*{phi}{1}{\pgfmathparse{ 
	 			(#1<=exp(\gam-1))*%
	 				(-(\gam-1)^2)%
	 			+(and(exp(\gam-1)<#1,#1<=exp(\gam))*%
		 			(ln(#1)^2+(2-2*(\gam))*ln(#1))%
				+(exp(\gam)<#1)*%
					(-(\gam)^2+2*(\gam)+(2/exp(\gam))*(exp(#1-exp(\gam))-1))%
			}}
			
			\begin{axis}[axis line style = thick, axis x line=middle, axis y line=middle, ymax=3.5, ymin=-1.4, xmin=.35, xmax=4.2, every tick/.style={thick}, height=9.1cm, width=16.1cm]
			
		 	\pgfmathsetmacro\gam{1}
		 		\addplot[domain=0.35:4.2,thick,samples=588, color=red] {phi(x)} node[pos=.49, above left] {$\varphi_1$};
			\pgfmathsetmacro\gam{0.4998}
				\addplot[domain=0.35:3.01,thick,samples=588, color=black] {phi(x)} node[pos=.49, below right] {$\varphi_{\afrac12}$};
			\pgfmathsetmacro\gam{.24997}
		 		\addplot[domain=0.35:2.52,thick,samples=588, color=blue] {phi(x)} node[pos=.56, below right] {$\varphi_{\afrac14}$};
		 	\pgfmathsetmacro\gam{0}
		 		\addplot[domain=0.35:2.31,thick,samples=588, color=brown] {phi(x)} node[pos=.49, above left] {$\varphi_0$};
			 
		 	\end{axis}
	 	\end{tikzpicture}
		
 	\end{centering}
 	\caption{\label{fig:phi} The function $\varphigeod_\gamma$ for different values of $\gamma$.}
 \end{figure}
 \begin{figure}[h!]
	 \begin{centering}
	 	\vspace*{1.4em}
	 	\tikzsetnextfilename{energyFunctionGraphs.pdf}
	 	\begin{tikzpicture}
		 	\pgfmathdeclarefunction*{phi}{1}{\pgfmathparse{%
			(#1<=exp(\gam-1))*%
					(-(\gam-1)^2)%
				+(and(exp(\gam-1)<#1,#1<=exp(\gam))*%
					(ln(#1)^2+(2-2*(\gam))*ln(#1))%
				+(exp(\gam)<#1)*%
					(-(\gam)^2+2*(\gam)+(2/exp(\gam))*(exp(#1-exp(\gam))-1))%
			}}
			
			\pgfmathdeclarefunction*{phiDetCorrected}{1}{\pgfmathparse{%
				3*(phi(#1)-((2-2*\gam))*ln(#1))%
			}}
			
			\begin{axis}[axis line style = thick, axis x line=middle, axis y line=middle, ymax=4.9, ymin=-1.4, xmin=.07, xmax=2.73, every tick/.style={thick}, height=9.1cm, width=16.1cm]
			
			\pgfmathsetmacro\gam{0.4998}
				\addplot[domain=0.07:2.8,thick,samples=588, color=black] {phiDetCorrected(x)} node[pos=.735, below right] {$f_{\afrac12}$};
			\pgfmathsetmacro\gam{.2499}
				\addplot[domain=0.07:2.8,thick,samples=588, color=blue] {phiDetCorrected(x)} node[pos=.567, above left] {$f_{\afrac14}$};
			\addplot[domain=0.07:2.8,thick,samples=588, color=red, dotted] {3*ln(x)^2} node[pos=.98, below right] {$f$};
			
		\end{axis}
		\end{tikzpicture}
		
	\end{centering}
	\caption{\label{fig:phiDetCorrected} The function $f_\gamma\col\lambda\mapsto \Wtildegeod_\gamma(\lambda\.\id)$ compared to the mapping $f\col\lambda\mapsto \WH(\lambda\.\id)$ with $\mu=1$ and $\Lambda=0$; note the singularity at $\lambda=0$.}
\end{figure}
\begin{corollary}
	In particular (for $\gamma=\frac12$), the function $W$ has a polyconvex extension to $\GLp(n)$ from the set
	\[
		\singsetgeod_{\afrac12} \colonequals \{ F\in\GLp(n) \setvert \tfrac{1}{\sqrt{e}}<\lambda<\sqrt{e} \text{ for each singular value $\lambda$ of $F$} \}\,.
	\]
\end{corollary}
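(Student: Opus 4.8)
The plan is to follow the proof of Lemma~\ref{lemma:ballPolyconvexExtension} almost verbatim, since the Corollary is exactly the special case $\gamma=\frac12$ of Lemma~\ref{lemma:logUsquaredExtension} (for which $e^{\gamma-1}=\tfrac{1}{\sqrt e}$ and $e^{\gamma}=\sqrt e$); I would establish the Lemma and then read off the Corollary at the very end. The argument splits into two independent tasks: showing that $\Wtildegeod_\gamma$ really agrees with $W$ on $\singsetgeod_\gamma$, and showing that $\Wtildegeod_\gamma$ is polyconvex on all of $\GLp(n)$.

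The first task is a direct substitution. For $F\in\singsetgeod_\gamma$ every singular value lies in $(e^{\gamma-1},e^{\gamma})$, so only the middle branch $\varphigeod_\gamma(\lambda_i)=\ln^2(\lambda_i)+(2-2\gamma)\ln(\lambda_i)$ is active; since $\sum_i\ln(\lambda_i)=\ln(\det F)$, the correction term $-(2-2\gamma)\ln(\det F)$ cancels the $(2-2\gamma)\ln(\lambda_i)$ contributions exactly, leaving $\sum_i\ln^2(\lambda_i)=W(F)$. This cancellation is the very purpose of the added linear-in-$\ln\lambda$ term.

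For polyconvexity I would assemble the same two ingredients as before: the map $F\mapsto-(2-2\gamma)\ln(\det F)$ is convex in $\det F$ — here $2-2\gamma\ge 0$ is exactly where the hypothesis $\gamma\le 1$ is used — and hence polyconvex, while by Ball's criterion \cite[Theorem~5.1]{ball1976convexity} the map $F\mapsto\sum_i\varphigeod_\gamma(\lambda_i)$ is convex as soon as $\varphigeod_\gamma$ is convex and nondecreasing on $(0,\infty)$; as a sum of two polyconvex maps, $\Wtildegeod_\gamma$ is then polyconvex. The heart of the matter is therefore the claim that $\varphigeod_\gamma$ is convex and nondecreasing. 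The conceptual point — and the reason a naive choice fails — is that $\ln^2(\lambda)$ alone is neither nondecreasing (it decreases on $(0,1)$) nor globally convex (it is concave for $\lambda>e$), so Ball's criterion cannot be applied to $\sum_i\ln^2(\lambda_i)$ directly. Adding $(2-2\gamma)\ln(\lambda)$ moves the minimum of the resulting quadratic-in-$\ln\lambda$ to $\lambda=e^{\gamma-1}$ and its inflection point to $\lambda=e^{\gamma}$; the constant left branch then flattens $\varphigeod_\gamma$ below $e^{\gamma-1}$ to repair monotonicity, and the exponential right branch replaces the now-concave tail above $e^{\gamma}$ to repair convexity.

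I expect the one genuinely non-routine step to be the verification of \emph{global} convexity, not the per-branch computations. On each branch the required signs come from short calculations: on the middle branch $\varphigeod_\gamma'(\lambda)=\frac{2(\ln\lambda+1-\gamma)}{\lambda}\ge 0$ for $\lambda\ge e^{\gamma-1}$ and $\varphigeod_\gamma''(\lambda)=\frac{2(\gamma-\ln\lambda)}{\lambda^2}\ge 0$ for $\lambda\le e^{\gamma}$, so this branch is simultaneously nondecreasing and convex exactly on $(e^{\gamma-1},e^{\gamma})$, while on the exponential branch $\varphigeod_\gamma'(\lambda)=\varphigeod_\gamma''(\lambda)=\frac{2}{e^{\gamma}}\,e^{\lambda-e^{\gamma}}>0$ and the left branch is constant. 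Since piecewise convexity does not by itself give global convexity, the crux is to confirm that the three branches glue together in a $C^1$ fashion at $\lambda=e^{\gamma-1}$ and $\lambda=e^{\gamma}$; the constants $-(\gamma-1)^2$ and $-\gamma^2+2\gamma$ together with the prefactor $\frac{2}{e^{\gamma}}$ are designed precisely so that both the values and the first derivatives match across each junction (the common slope being $0$ at $e^{\gamma-1}$ and $\frac{2}{e^{\gamma}}$ at $e^{\gamma}$). Once $C^1$ matching is secured, the continuous derivative $\varphigeod_\gamma'$ is nonnegative and nondecreasing on all of $(0,\infty)$, which delivers the global monotonicity and convexity of $\varphigeod_\gamma$ and finishes the proof; setting $\gamma=\frac12$ yields the Corollary.
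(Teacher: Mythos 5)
Your proposal is correct and follows essentially the same route as the paper: verify that the correction term $-(2-2\gamma)\ln(\det F)$ cancels the added linear-in-$\ln\lambda$ part on $\singsetgeod_\gamma$, then obtain polyconvexity from Ball's criterion by showing $\varphigeod_\gamma$ is convex and nondecreasing via branch-wise derivative computations and the $C^1$ matching at $e^{\gamma-1}$ and $e^\gamma$ (a gluing step the paper dispatches with ``it is easy to see''), and finally set $\gamma=\frac12$. The only cosmetic difference is that you spell out the junction checks explicitly, which the paper leaves to the reader.
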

\begin{proof}
	If $e^{\gamma-1}<\lambda<e^\gamma$ for all singular values $\lambda_1,\dotsc,\lambda_n$ of $F$, then
	\[
		\Wtildegeod_\gamma(F) = \sum_{i=1}^n \ln^2(\lambda_i) + (2-2\.\gamma)\,\underbrace{\sum_{i=1}^n \ln(\lambda_i)}_{=\ln(\det F)} \,- (2-2\.\gamma)\.\ln(\det F) = W(F)\,.
	\]
	It remains to show that $\Wtildegeod_\gamma$ is polyconvex. If $e^{\gamma-1}<\lambda<e^\gamma$, then
	\begin{align*}
		\varphigeod_\gamma'(\lambda) &= \frac{2\.\ln(\lambda)}{\lambda} + \frac{2-2\.\gamma}{\lambda} = \frac{2\.\ln(\lambda)+2-2\.\gamma}{\lambda} \geq \frac{2\.\ln(e^{\gamma-1})+2-2\.\gamma}{\lambda} = 0\,,\\[.7em]
		\varphigeod_\gamma''(\lambda) &= \frac{2}{\lambda^2} - \frac{2\.\ln(\lambda)}{\lambda^2} - \frac{2-2\.\gamma}{\lambda^2} = \frac{2\,\gamma-2\.\ln(\lambda)}{\lambda^2} \geq \frac{2\,\gamma-2\.\ln(e^\gamma)}{\lambda^2} = 0\,.
	\end{align*}
	It is easy to see that $\varphigeod_\gamma$ is continuous on $(0,\infty)$ as well as differentiable on $(0,\infty)\setminus\{e^{\gamma-1}\}$ and that $\varphigeod_\gamma'$ is nonnegative and nondecreasing. Thus $\varphigeod_\gamma$ is nondecreasing and convex. Due to Ball's criterion \cite[Theorem 5.1]{ball1976convexity}, the mapping $F\mapsto\sum_{i=1}^n \varphigeod_\gamma(\lambda_i)$ is therefore convex. Since the mapping $F\mapsto -(2-2\.\gamma)\,\ln(\det F)$ is polyconvex for $\gamma\leq1$, the function $\Wtildegeod_\gamma$ is polyconvex as well.
\end{proof}
\noindent Lemma \ref{lemma:logUsquaredExtension} can be applied directly to the classical Hencky strain energy.

\begin{proposition}
\label{prop:henckyExtension}
	Let $\WH$ denote the \emph{quadratic Hencky energy}, given by
	\[
		\WH(F) = \mu\,\norm{\dev_n\log \sqrt{F^TF}}^2+\frac{\kappa}{2}\,[\tr(\log \sqrt{F^TF})]^2 = \mu\,\norm{\log \sqrt{F^TF}}^2+\frac{\Lambda}{2}\,[\tr(\log \sqrt{F^TF})]^2\,,
	\]
	where $\mu$ is the shear modulus, $\kappa$ is the bulk modulus and $\Lambda$ is the first Lam\'e parameter. If $\Lambda\geq0$, then the restriction of $\WH$ to the set
	\[
		\singsetgeod_{\afrac13} = \{ F\in\GLp(n) \setvert e^{-\afrac23}<\lambda<e^{\afrac13} \text{ for each singular value $\lambda$ of $F$} \}
	\]
	has a polyconvex extension to $\GLp(n)$.
\end{proposition}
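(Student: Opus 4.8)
The plan is to start from the second representation $\WH(F) = \mu\,\norm{\log U}^2 + \frac{\Lambda}{2}\,[\tr(\log U)]^2$ and to extend its two summands separately. Since $U = \sqrt{F^T F}$ gives $\det U = \det F > 0$ on $\GLp(n)$, the volumetric invariant is just $[\tr(\log U)]^2 = (\ln\det F)^2$, a function of the single minor $\det F$, while $\norm{\log U}^2 = \sum_{i=1}^n \ln^2(\lambda_i)$ is exactly the geodesic strain measure handled by Lemma \ref{lemma:logUsquaredExtension}. I would therefore seek an extension of the form
\[
	\WHtilde(F) = \mu\,\Wtildegeod_{\afrac13}(F) + \tfrac{\Lambda}{2}\,V(\det F)\,,
\]
where $\Wtildegeod_{\afrac13}$ is the polyconvex extension from Lemma \ref{lemma:logUsquaredExtension} for $\gamma = \tfrac13$ and $V\col(0,\infty)\to\R$ is a convex function still to be chosen.

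Granting such a $V$, the verification is immediate. By Lemma \ref{lemma:logUsquaredExtension}, $\Wtildegeod_{\afrac13}$ is polyconvex and agrees with $\norm{\log U}^2$ on $\singsetgeod_{\afrac13}$; since $\mu \ge 0$, the first summand is polyconvex. If $V$ is convex then, because $\Lambda \ge 0$, the map $F \mapsto \tfrac{\Lambda}{2}\,V(\det F)$ is a convex function of the minor $\det F$ and hence polyconvex (exactly as for the term $-(2-2\gamma)\ln(\det F)$ in the corollary above), so $\WHtilde$ is polyconvex as a sum of polyconvex functions. Finally, on $\singsetgeod_{\afrac13}$ one has $\Wtildegeod_{\afrac13} = \norm{\log U}^2$, and provided $V(\det F) = (\ln\det F)^2$ for every determinant attained there, $\WHtilde = \mu\,\norm{\log U}^2 + \tfrac{\Lambda}{2}\,(\ln\det F)^2 = \WH$ on the window.

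The only real obstacle is producing the convex $V$, and this is where the choice $\gamma = \tfrac13$ and the hypothesis $\Lambda \ge 0$ do the work. A short computation gives, for $g(\delta) = (\ln\delta)^2$, the second derivative $g''(\delta) = (2 - 2\ln\delta)/\delta^2$, so $g$ is convex exactly on $(0,e]$ and strictly concave beyond. On $\singsetgeod_{\afrac13}$ each singular value lies in $(e^{-\afrac23}, e^{\afrac13})$, whence $\ln\det F = \sum_i \ln(\lambda_i) \in (-\tfrac{2n}{3}, \tfrac n3)$ and $\det F \in (e^{-2n/3}, e^{n/3})$; the upper bound equals $e$ precisely in the physical dimension $n = 3$, so there the attained determinants never leave the convexity region of $g$. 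I would then simply set $V(\delta) = (\ln\delta)^2$ for $\delta \le e$ and continue by the tangent line $V(\delta) = 1 + \tfrac{2}{e}(\delta - e)$ for $\delta > e$; this $V$ is $C^1$ and convex, matches $g$ on the relevant range, and completes the construction. (For $n > 3$ the same scheme would force the smaller window $\singsetgeod_{1/n}$, since only then does $\det F$ stay below $e$, so I expect the substance of the argument to live entirely in this determinant-range bookkeeping rather than in any polyconvexity estimate.)
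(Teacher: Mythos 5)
Your proposal is correct and follows essentially the same route as the paper: decompose $\WH$ as $\mu\,\norm{\log U}^2 + \frac{\Lambda}{2}\,\ln^2(\det F)$, apply Lemma \ref{lemma:logUsquaredExtension} with $\gamma=\afrac13$ to the first summand, and replace $t\mapsto\ln^2(t)$ by a convex $C^1$ continuation beyond $t=e$, where it ceases to be convex (the paper continues exponentially where you use the tangent line, which makes no difference for polyconvexity). Your observation that $\det F\leq e$ on $\singsetgeod_{\afrac13}$ only for $n\leq 3$ is a caveat the paper's proof silently relies on as well.
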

\begin{proof}
	As in Lemma \ref{lemma:logUsquaredExtension}, let $\Wtildegeod_{\afrac13}$ denote the polyconvex extension of the mapping $F\mapsto W(F)=\norm{\log\sqrt{F^TF}}^2$ from $\singsetgeod_{\afrac13}$ to $\GLp(n)$, and let
	\[
		\WHtilde(F) = \Wtildegeod_{\afrac13}(F) + \psi(\det F)\,,
	\]
	where
	\[
		\psi(t) = \begin{cases}
			\frac{\Lambda}{2}\,\ln^2(t) &:\; t\leq e\,,\\
			\frac\Lambda2 + \frac{\Lambda}{e}\,(e^{t-e}-1) &:\; t>e
		\end{cases}
	\]
	for all $F\in\singsetgeod_{\afrac13}$ with singular values $\lambda_1,\dotsc,\lambda_n$. Then for all $F\in\singsetgeod_{\afrac13}$,
	\[
		\WHtilde(F) = \Wtildegeod_{\afrac13}(F) + \psi(\det F) = W(F) + \psi(\underbrace{\lambda_1\.\dotsc\.\lambda_n}_{\leq e}) = \norm{\log\sqrt{F^TF}}^2 + \frac{\Lambda}{2}\,\ln^2(t) = \WH(F)\,.
	\]
	Furthermore, it is easy to see that the mapping $\psi$ is continuously differentiable with non-decreasing derivative on $(0,\infty)$ and therefore convex. Thus $\WHtilde$ is polyconvex on $\GLp(n)$ as the sum of the polyconvex mapping $\Wtildegeod_{\afrac13}$ and a convex function of $\det F$.
\end{proof}

\begin{figure}[h!]
	\begin{centering}
		\vspace*{1.4em}
		\tikzsetnextfilename{psiVolPartFunctionGraphs.pdf}
		\begin{tikzpicture}
	 		\pgfmathdeclarefunction*{psi}{1}{\pgfmathparse{ 
	 			(#1<=e)*%
	 				(\Lam/2*ln(#1)^2)%
	 			+(exp(1)<#1)*%
		 			(\Lam/2+\Lam/e*(exp(#1-e)-1))%
			}}
			
			\begin{axis}[axis line style = thick, axis x line=middle, axis y line=middle, ymax=7, ymin=-.21, xmin=0, xmax=14.7, ytick={-14}, xtick={1,3,5,7,9,11,13}, every tick/.style={thick}, height=9.1cm, width=16.1cm]
				\addplot[domain=.98:14.7,thick,samples=588, color=red, dotted] {ln(x)^2} node[pos=.84, below right] {$t\mapsto\ln^2(t)$};
				\pgfmathsetmacro\Lam{2}
					\addplot[domain=0.014:7,thick,samples=588, color=black] {psi(x)} node[pos=.35, below right] {$\psi$};
		 	\end{axis}
	 	\end{tikzpicture}
	
	\end{centering}
	\caption{\label{fig:psi} The volumetric part $\psi$ of the polyconvex extension and the original volumetric term $\ln^2(t)$ of the classical Hencky energy.}
\end{figure}

\begin{remark}
	Bruhns et al.\ \cite{Bruhns01} have shown that for $\Lambda\geq0$, the quadratic Hencky strain energy is elliptic on the set of all $F\in\GLp(3)$ with all singular values in the interval $[\alpha,\sqrt[3]{e}]$, where $\alpha\approx0.21<e^{-2/3}$, cf.\ \cite{agn_ghiba2015ellipticity}.
\end{remark}

\begin{remark}
	Note that, even though $\varphi_\gamma(\lambda)$ remains bounded for $\lambda\to0$, the energy $\WHtilde$ exhibits (physically reasonable) singular behavior for $\det F\to0$.
\end{remark}

\subsection{Existence of minimizers}

In order to apply some well-known existence theorems from the direct methods of the calculus of variations to the energy $\WHtilde$, it remains to show that $\WHtilde$ is coercive in the appropriate Sobolev spaces. However, due to the exponential nature of the chosen extension, the following lemma immediately follows from the observation that $\lambdamax>e^{\afrac13}$ for the largest singular value $\lambdamax$ of $F$ if $\norm{F}$ is sufficiently large, in which case
\[
	\WHtilde(F) \geq \Wtildegeod_{\afrac13}(F) \geq -\frac89 + \varphigeod_{\afrac13}(\lambdamax) - \frac43\.\ln(\lambdamax^3) = -\frac13 + \frac{2}{e^{\afrac13}}\.(e^{\lambdamax-e^{\afrac13}}-1) - 4\.\ln(\lambdamax) \geq K_1\.e^{K_3\.\norm{F}} - K_2
\]
for appropriate constants $K_1,K_2,K_3>0$.

\begin{lemma}
	The polyconvex extension $\WHtilde$ of the Hencky energy from Proposition \ref{prop:henckyExtension} is \emph{unconditionally coercive}, i.e.\ for any $p>0$, there exist constants $K_1,K_2>0$ such that
	\begin{align*}
		\WHtilde(F) \geq K_1\.\norm{F}^p - K_2\,.
	\end{align*}
	In particular, for each bounded, connected, open set $\Omega\subset\R^n$ with Lipschitz boundary and any $p\geq1$, the energy functional
	\[
		I\col W^{1,p}(\Omega;\R^n)\to\R\,,\quad I(\varphi) = \int_\Omega \WHtilde(\grad\varphi(x))\,\dx
	\]
	is coercive in the Sobolev space $W^{1,p}(\Omega)$.
\end{lemma}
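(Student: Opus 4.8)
The plan is to reduce the functional claim to a pointwise growth estimate on $\WHtilde$ and then integrate over $\Omega$. The decisive structural feature is that the profile $\varphigeod_{\afrac13}$ grows \emph{exponentially} in its argument on the branch $\lambda\geq e^{\afrac13}$, so that exponential growth in the single largest singular value $\lambdamax$ of $F$ will dominate any polynomial in $\norm{F}$. Before the main estimate I would record two elementary facts: since $\Lambda\geq0$, the volumetric correction $\psi$ is nonnegative on $(0,\infty)$, whence $\WHtilde(F)\geq\Wtildegeod_{\afrac13}(F)$; and $\varphigeod_{\afrac13}$ is bounded below on $(0,\infty)$, with minimum $-\tfrac49$ attained at $\lambda=e^{-\afrac23}$ (as one checks by minimizing $\ln^2\lambda+\tfrac43\ln\lambda$ over the middle branch and comparing with the constant value $-(\gamma-1)^2$ of the lower branch).

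For the pointwise bound I would write $\Wtildegeod_{\afrac13}(F)=\sum_{i=1}^n\varphigeod_{\afrac13}(\lambda_i)-\tfrac43\ln(\det F)$, retain only the summand corresponding to $\lambdamax$, and discard the remaining $n-1$ terms using the uniform lower bound $\varphigeod_{\afrac13}\geq-\tfrac49$. Together with $\ln(\det F)\leq\ln(\lambdamax^n)$ this gives
\[
	\WHtilde(F)\;\geq\;\varphigeod_{\afrac13}(\lambdamax)-(n-1)\tfrac49-\tfrac43\,n\ln(\lambdamax)\,.
\]
Once $\norm{F}$ is large enough we have $\lambdamax>e^{\afrac13}$, so substituting the exponential branch of $\varphigeod_{\afrac13}$ and using $\lambdamax\geq\norm{F}/\sqrt{n}$ yields $\WHtilde(F)\geq K_1\,e^{K_3\norm{F}}-K_2$, the logarithmic term being absorbed harmlessly. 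Since $e^{K_3 t}$ dominates $t^p$ for every $p>0$, this at once gives $\WHtilde(F)\geq K_1'\norm{F}^p-K_2'$ for large $\norm{F}$; on the complementary bounded set $\{\norm{F}\leq R\}$ the continuous function $\WHtilde$ is bounded below, so after enlarging $K_2'$ the estimate $\WHtilde(F)\geq K_1'\norm{F}^p-K_2'$ holds on all of $\GLpn$.

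The functional statement then follows by integration: for $\varphi\in W^{1,p}(\Omega;\R^n)$ the pointwise bound gives
\[
	I(\varphi)=\int_\Omega\WHtilde(\grad\varphi)\,\dx\;\geq\;K_1'\int_\Omega\norm{\grad\varphi}^p\,\dx-K_2'\,\lvert\Omega\rvert=K_1'\,\norm{\grad\varphi}_{L^p}^p-K_2'\,\lvert\Omega\rvert\,,
\]
which controls the $W^{1,p}$-seminorm of $\varphi$. To upgrade this to coercivity in the full $W^{1,p}$-norm, as required by Ball's existence theorems, one restricts to an admissible class with prescribed Dirichlet data on a portion of $\partial\Omega$ and invokes a Poincar\'e inequality to bound $\norm{\varphi}_{L^p}$ by $\norm{\grad\varphi}_{L^p}$.

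I expect the pointwise estimate to be the main obstacle, specifically the bookkeeping that converts growth in the scalar $\lambdamax$ into growth in $\norm{F}$: one must check that discarding the non-maximal singular values and the $\ln\det F$ correction does not destroy the exponential lower bound, and that the comparison $\lambdamax\geq\norm{F}/\sqrt{n}$ is valid for every $F\in\GLpn$ rather than only near $\id$. Once the exponential pointwise bound is in hand, the domination of $e^{K_3 t}$ over powers and the integration step are entirely routine.
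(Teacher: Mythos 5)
Your argument is essentially the paper's own: the paper likewise observes that $\psi\geq 0$, retains only the exponential branch of $\varphigeod_{\afrac13}$ evaluated at $\lambdamax$, bounds the remaining singular-value terms below by the constant $-\tfrac49$ and the $\ln(\det F)$ correction by $n\ln(\lambdamax)$, and concludes that $\WHtilde(F)\geq K_1\,e^{K_3\norm{F}}-K_2$, which dominates every power of $\norm{F}$. Your additional care about the bounded region (where one should use $\varphigeod_{\afrac13}\geq-\tfrac49$, $\psi\geq0$ and the boundedness of $\det F$ rather than compactness, since $\{\norm{F}\leq R\}\cap\GLpn$ is not closed) and about invoking Poincar\'e for the full $W^{1,p}$-coercivity only makes explicit what the paper leaves implicit.
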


Since $\WHtilde$ is polyconvex, coercive and bounded below, we can directly apply Ball's classical results on the existence of minimizers for polyconvex energy functions \cite{ball1976convexity} to the energy functional given by $\WHtilde$.

\begin{proposition}
	Let $\Omega\subset \mathbb{R}^n$ be a bounded smooth domain, $\Gamma_D$ be a non-empty and relatively open part of the boundary $\partial\Omega$ and $\varphi_0\in W^{1,q}(\Omega)$ for some $q>1$ such that $\int_\Omega \WHtilde(\grad\varphi_0(x))\,\dx<\infty$. Then there exists at least one $\widehat{\varphi}\in W^{1,p}(\Omega)$ with $\widehat{\varphi}|_{\Gamma_D} = \varphi_0$ such that
	\begin{equation}\label{eq:minimizerExistence}
		\int_\Omega \WHtilde(\grad\widehat{\varphi}(x))\,\dx \;=\; \min\;\left\{\, \int_\Omega \WHtilde(\grad\varphi(x))\,\dx \,\setvert\, \varphi\in W^{1,p}(\Omega)\,,\; \varphi|_{\Gamma_D} = \varphi_0 \,\right\}\,.
	\end{equation}
\end{proposition}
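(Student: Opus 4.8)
The plan is to run the direct method of the calculus of variations, treating the three properties of $\WHtilde$ already in hand—polyconvexity (Proposition~\ref{prop:henckyExtension}), unconditional coercivity (the preceding lemma) and boundedness below—as exactly the hypotheses of Ball's existence theorem \cite{ball1976convexity}. A convenient first observation is that the admissible class does not depend in any essential way on the exponent: unconditional coercivity gives $\WHtilde(\grad\varphi_0(x)) \geq K_1\,\norm{\grad\varphi_0(x)}^p - K_2$ for every $p>1$, so $\int_\Omega \WHtilde(\grad\varphi_0)\,\dx<\infty$ forces $\varphi_0\in W^{1,p}(\Omega)$ for \emph{every} $p$. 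I would therefore fix some $p>n$ once and for all and work in $W^{1,p}(\Omega)$, where the admissible set $\mathcal{A}\colonequals\{\varphi\in W^{1,p}(\Omega)\setvert \varphi|_{\Gamma_D}=\varphi_0\}$ is nonempty (it contains $\varphi_0$), carries a finite infimum $m\colonequals\inf_{\mathcal{A}} I$, and on which $I$ is bounded below.

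Next I would select a minimizing sequence $(\varphi_k)\subset\mathcal{A}$ with $I(\varphi_k)\to m$. Unconditional coercivity bounds $\norm{\grad\varphi_k}_{L^p}$ uniformly, and since $\varphi_k=\varphi_0$ on the nonempty, relatively open set $\Gamma_D\subset\partial\Omega$, a Poincaré (Friedrichs) inequality promotes this to a uniform bound on $\norm{\varphi_k}_{W^{1,p}}$. As $p>1$, the space $W^{1,p}(\Omega)$ is reflexive, so after passing to a subsequence $\varphi_k\rightharpoonup\widehat\varphi$ weakly in $W^{1,p}(\Omega)$. The boundary condition survives the limit because the trace operator $W^{1,p}(\Omega)\to L^p(\partial\Omega)$ is bounded and linear, hence weakly continuous; as the traces agree with $\varphi_0$ on $\Gamma_D$ for every $k$, so does the trace of $\widehat\varphi$, giving $\widehat\varphi\in\mathcal{A}$.

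The heart of the argument is weak lower semicontinuity, $I(\widehat\varphi)\leq\liminf_k I(\varphi_k)$. Here I would use the explicit polyconvex splitting $\WHtilde(F)=h(F)+G(\det F)$ with $h(F)=\sum_{i=1}^n\varphigeod_{\afrac13}(\lambda_i)$ convex as a function of $F$ and $G(t)=-\afrac43\ln t+\psi(t)$ convex and bounded below. The term $\int_\Omega h(\grad\varphi)\,\dx$ is weakly lower semicontinuous by Tonelli's theorem, since $h$ is convex in $\grad\varphi$. For the volumetric term $\int_\Omega G(\det\grad\varphi)\,\dx$ I would invoke the weak continuity of the Jacobian: because $\det\grad\varphi_k$ is an $n$-homogeneous polynomial in an $L^p$-bounded gradient with $p>n$, it is bounded in $L^{p/n}$ with $p/n>1$ and satisfies $\det\grad\varphi_k\rightharpoonup\det\grad\widehat\varphi$ weakly in $L^{p/n}(\Omega)$; convexity of $G$ then yields lower semicontinuity of this term as well. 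Adding the two contributions gives $I(\widehat\varphi)\leq m$, and since $\widehat\varphi\in\mathcal{A}$ we conclude $I(\widehat\varphi)=m$, which is precisely \eqref{eq:minimizerExistence}.

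I expect the determinant to be the genuine obstacle, for two intertwined reasons. First, the weak continuity of $F\mapsto\det F$ is a nonlinear phenomenon that hinges on the integrability threshold $p\geq n$; this is exactly why the \emph{unconditional} coercivity—which supplies an $L^p$-bound for arbitrarily large $p$—is indispensable rather than merely cosmetic. Second, one must guarantee that the minimizer is an orientation-preserving deformation, i.e.\ that $\det\grad\widehat\varphi>0$ almost everywhere; this is where the singular barrier $-\afrac43\ln(\det F)\to+\infty$ as $\det F\to0^+$ encoded in $\Wtildegeod_{\afrac13}$ does its work, forcing any finite-energy weak limit to keep its Jacobian positive a.e. Once these two points are secured, the remaining verifications are the routine bookkeeping of the direct method, and the statement follows.
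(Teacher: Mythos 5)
Your proof is correct and follows essentially the same route as the paper: the paper simply invokes Ball's existence theorem on the strength of polyconvexity, unconditional coercivity and boundedness below, and your argument is precisely the direct-method proof of that theorem unpacked for this energy (with the convenient choice $p>n$, which the unconditional coercivity makes available and which renders the weak continuity of the Jacobian elementary). No gaps.
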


\section{Energy functions in Valanis-Landel form}

We can apply the same extension method to the more general case of \emph{Valanis-Landel type} energy functions, i.e.\ to functions of the form
\[
	\WVL\col\GLpn\to\R\,,\quad \WVL(F) = \sum_{i=1}^n w(\lambda_i)
\]
with a scalar function $w\col(0,\infty)\to\R$. Functions of this type were suggested by Valanis and Landel \cite{valanis1967} as a general hyperelastic model for \emph{incompressible} materials, but are often coupled additively with volumetric energy terms in order to obtain elastic models for compressible materials (including the quadratic Hencky energy $\WH$ as well as Ogden's classical material model \cite{ogden1972large}). Note that the energy $\WVL$ can only be compatible with linear elasticity at the identity $\id$ if $w(1)=0$, $w'(1)=0$ and $w''(1)>0$; the latter two conditions represent the requirements of a stress-free reference configuration and ellipticity at $\id$, respectively.

\begin{proposition}
\label{prop:valanisLandelGeneral}
	Let $w\in C^2((0,\infty))$ such that $w'(1)=0$ and $w''(1)>0$. Then the function
	\[
		\WVL\col\GLpn\to\R\,,\quad \WVL(F) = \sum_{i=1}^n w(\lambda_i)
	\]
	has a polyconvex extension from a neighborhood of the identity $F=\id$ to $\GLpn$.
\end{proposition}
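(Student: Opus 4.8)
The plan is to follow the same recipe as in Lemma \ref{lemma:logUsquaredExtension}: add a suitable multiple of $\ln(\lambda)$ to $w$ so as to make it convex and nondecreasing on a neighborhood of $\lambda=1$, extend this modified function convexly and monotonically to all of $(0,\infty)$, and then subtract off the added term globally by means of a polyconvex multiple of $\ln(\det F)$. The only place where the hypotheses on $w$ are actually needed is in securing the neighborhood; everything else transcribes the earlier piecewise construction.

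First I would fix a constant $c$ with $0<c<w''(1)$ and consider $\varphi(\lambda)\colonequals w(\lambda)+c\.\ln(\lambda)$. The assumptions $w'(1)=0$ and $w''(1)>0$ yield $\varphi'(1)=w'(1)+c=c>0$ and $\varphi''(1)=w''(1)-c>0$. Since $w\in C^2((0,\infty))$, both $\varphi'$ and $\varphi''$ are continuous, so there exists an interval $(a,b)$ with $a<1<b$ on which $\varphi'>0$ and $\varphi''>0$; thus $\varphi$ is strictly increasing and strictly convex on $(a,b)$.

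Next I would extend $\varphi|_{(a,b)}$ to a globally convex and nondecreasing function $\widetilde\varphi\col(0,\infty)\to\R$, exactly in the spirit of the piecewise definition of $\varphigeod_\gamma$ in Lemma \ref{lemma:logUsquaredExtension}. For $\lambda\leq a$ I would set $\widetilde\varphi$ equal to the constant $\varphi(a)$, and for $\lambda\geq b$ I would continue affinely with slope $\varphi'(b)\geq0$ (an exponential continuation is equally available if one additionally wants coercivity). Because $\varphi'$ is nonnegative and nondecreasing on $(a,b)$, the one-sided derivatives at the two junction points satisfy the inequalities required for convexity, so $\widetilde\varphi$ is continuous, nondecreasing and convex on all of $(0,\infty)$.

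Finally, I would set
\[
	\Wtilde(F) = \sum_{i=1}^n \widetilde\varphi(\lambda_i) \;-\; c\.\ln(\det F)\,.
\]
By Ball's criterion \cite[Theorem 5.1]{ball1976convexity}, the first sum is convex, hence polyconvex, since $\widetilde\varphi$ is convex and nondecreasing; and $F\mapsto -c\.\ln(\det F)$ is a convex function of $\det F$ (as $c>0$) and therefore polyconvex, so $\Wtilde$ is polyconvex. On the neighborhood $\{F\in\GLpn \setvert a<\lambda_i<b \text{ for each singular value } \lambda_i\}$ of $\id$ one has $\widetilde\varphi=\varphi=w+c\.\ln$, whence the added logarithmic terms cancel against $-c\.\ln(\det F)=-c\sum_i\ln(\lambda_i)$ and $\Wtilde(F)=\sum_i w(\lambda_i)=\WVL(F)$, as required. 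The main obstacle is the existence of the interval $(a,b)$, which is precisely where the structural conditions $w'(1)=0$ and $w''(1)>0$ enter; once the correct coefficient $c$ is chosen so that both $\varphi'(1)$ and $\varphi''(1)$ are strictly positive, the remainder of the argument is routine.
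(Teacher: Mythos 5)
Your proposal is correct and follows essentially the same route as the paper: add a multiple $c\.\ln(\lambda)$ of the logarithm to $w$ so that the resulting $\varphi$ has positive first and second derivative at $\lambda=1$ (the paper fixes $c=w''(1)/8$ and verifies this via explicit bounds on $w'$ and $w''$ near $1$, where you invoke continuity for a generic $0<c<w''(1)$), extend $\varphi$ constantly below and affinely above the resulting interval, and compensate with the polyconvex term $-c\.\ln(\det F)$. The remaining steps (Ball's criterion for the convex nondecreasing singular-value sum, cancellation of the logarithms on the neighborhood of $\id$) match the paper's argument exactly.
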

\begin{proof}
	Choose $0<\eps<\frac12$ such that $w''(\lambda)>\frac{w''(1)}{2}$ and $w'(\lambda)>-\frac{w''(1)}{12}$ for all $\lambda\in[1-\eps,1+\eps]$. Let
	\[
		\WVLtilde(F) = \sum_{i=1}^n \varphi(\lambda_i) \; - \frac{w''(1)}{8}\.\ln(\det F)
	\]
	for all $F\in\GLpn$ with singular values $\lambda_1,\dotsc,\lambda_n$, where
	\[
		\varphi(\lambda) =
		\begin{cases}
			w(1-\eps) + \frac{w''(1)}{8}\.\ln(1-\eps) &: \lambda\leq1-\eps\,,\\
			w(\lambda) + \frac{w''(1)}{8}\.\ln(\lambda) &: 1-\eps<\lambda<1+\eps\,,\\
			w(1+\eps) + \left( w'(1+\eps)+\frac{w''(1)}{8\.(1+\eps)} \right) \cdot (\lambda-(1+\eps)) &: 1\leq\lambda\,.
		\end{cases}\,.
	\]
	Then $\varphi$ is continuous and differentiable on $(0,\infty)\setminus\{1-\eps\}$. Furthermore, $\varphi'$ is non-decreasing and non-negative, since
	\begin{alignat*}{3}
		\varphi'(\lambda) &= w'(\lambda) + \frac{w''(1)}{8\.\lambda} \;&&\overset{\lambda<1+\eps<\frac32}{\geq}\; &&w'(\lambda)+\frac{w''(1)}{12} > 0\,,\\
		\varphi''(\lambda) &= w''(\lambda) - \frac{w''(1)}{8\.\lambda^2} \;&&\overset{\lambda>1-\eps>\frac12}{\geq}\; &&w''(\lambda)-\frac{w''(1)}{2} > 0
	\end{alignat*}
	for all $\lambda\in(1-\eps,1+\eps)$. Thus $\varphi$ is convex and non-decreasing, which implies the convexity of the mapping $F\mapsto\sum_{i=1}^n \varphi(\lambda)$ and thus the polyconvexity of $\WVLtilde$. Finally, $\WVLtilde(F)=\WVL(F)$ for all $F\in\GLpn$ with singular values $\lambda_1,\dotsc,\lambda_n$ such that $1-\eps<\lambda_i<1+\eps$ for all $i=1,\dotsc n$.
\end{proof}

\begin{remark}
	Of course, Proposition \ref{prop:valanisLandelGeneral} is also applicable to energy functions of the generalized Valanis-Landel form
	\[
		W(F) = \sum_{i=1}^n w(\lambda_i) + \Wvol(\det F)
	\]
	if $\Wvol\col(0,\infty)\to\R$ is convex in a neighborhood of $1$ (in which case it can easily be extended to a convex function on $(0,\infty)$).
\end{remark}

\section*{Acknowledgment}
The work of Ionel-Dumitrel Ghiba was supported by a grant of the Romanian National Authority for Scientific Research and Innovation, CNCS-UEFISCDI, project number PN-III-P1-1.1-TE-2016-2314.

We thank Sir John Ball (Oxford Centre for Nonlinear PDE) for sharing with us his unpublished work on the polyconvex extension of the Euclidean distance to $\SO(3)$.

\begin{footnotesize}
	\printbibliography[heading=bibnumbered]
\end{footnotesize}
%

\appendix

\section{The rank-one convex hull}
\label{appendix:rankOneConvexHull}

Another approach to finding a rank-one convex \enquote{approximation} of a non-elliptic energy function $W\col\GLpn\to\R$ is the computation of its \emph{rank-one convex hull} $\Wbar$, i.e.\ the largest rank-one convex function below $W$. However, only in few cases can $\Wbar$ be determined analytically. One of these cases is the mapping $F\mapsto\dist^2(F,\SO(2))$, the rank-one convex hull of which is given by \cite{dolzmann2012regularity}
\[
	F\mapsto
	\begin{cases}
		\dist^2(F,\SO(2)) &: \norm{F^+}\geq\frac{\sqrt{2}}{2}\,,\\
		1-2\.\det F &: \text{otherwise}\,,
	\end{cases}\,.
\]
where
\[
	F^+ = \frac12\,\matr{F_{11}+F_{22}&F_{12}-F_{21}\\ F_{21}-F_{12}&F_{11}+F_{22}}
\]
denotes the \emph{conformal part} of $F$.

A fundamental problem of this approach, however, is that the energy function might be changed at points within its domain of ellipticity as well; in particular, the rank-one convex hull $\Wbar$ of $W$ is not necessarily equal to $W$ in a neighborhood of $\id$ and does therefore not induce the same material behavior even for very small strains. For example, the (rank-one) convex hull of the one-dimensional standard double-well potential $x\mapsto(x^2-1)^2$ is given by
\[
	x\mapsto
	\begin{cases}
		0 &: x\in(-1,1),\\
		(x^2-1)^2 &: x\notin(-1,1)\,,
	\end{cases}
\]
thus the energy is changed on every interval $1-\eps,1+\eps$.

\begin{figure}[h!]
	\begin{centering}
		\vspace*{1.4em}
		\tikzsetnextfilename{convexHull.pdf}
 		\begin{tikzpicture}
	 		\pgfmathdeclarefunction*{Wbar}{1}{\pgfmathparse{ 
	 			or(#1<=-1,1<=#1)*%
	 				(#1^2-1)^2%
			}}
			
			\begin{axis}[axis line style = thick, axis x line=bottom, axis y line=left, xtick={-1,0,1}, ytick={0,0.5,1},%
				ymax=1.05, ymin=-.07, xmin=-1.47, xmax=1.47, every tick/.style={thick}, height=4.9cm, width=9.1cm]
			
			\addplot[domain=-2:2,thick,samples=588, color=red] {Wbar(x)} node[pos=.525, above right] {$\Wbar$};
			\addplot[domain=-2:2,very thick,samples=588, color=black, dashed] {(x^2-1)^2} node[pos=.525, above right] {$W$};
			 
		 	\end{axis}
	 	\end{tikzpicture}
		
 	\end{centering}
 	\caption{\label{fig:convexHull} The (rank-one) covex hull $\Wbar$ of the function $W\col\R\to\R\,,\; W(x)=(x^2-1)^2$.}
 \end{figure}

\end{document}